\newtheorem{THM}{Theorem}
\newtheorem{LEM}[THM]{Lemma}
\newtheorem{CON}[THM]{Conjecture}
\newtheorem{CLA}{Claim}
\newtheorem*{unCLA}{Claim}
\theoremstyle{definition}
\newtheorem{CASE}{Case}
\newcommand{\CC}{\mathcal{C}}
\def\even{_{\fam0 even}}
\def\adm{_{\fam0 adm}}
\def\Ce{\CC\even}
\def\Ca{\CC\adm}
\def\Fe{F'\even}
\def\Fa{F'\adm}
\def\Xa{X\adm}
\def\Ya{Y\adm}
\def\Tp{T^*}
\def\Ct{{\widetilde C}}
\def\Xt{{\widetilde X}}
\def\Yt{{\widetilde Y}}
\def\xt{{\widetilde x}}
\def\yt{{\widetilde y}}
\title{Spanning weakly even trees of graphs}
\author{%
  Jiangdong Ai\footnote{School of Mathematical Sciences and LPMC, Nankai University, Tianjin 300071, P.R. China. Email: \texttt{jd@nankai.edu.cn}.}%
\and
  M.~N. Ellingham\footnote{Department of Mathematics, Vanderbilt University, 1326 Stevenson Center, Nashville, Tennessee 37240, USA. Email: \texttt{mark.ellingham@vanderbilt.edu}. Supported by Simons Foundation award MPS-TSM-00002760 and ARIS grant BI-US/22-24-77.}%
\and
  Zhipeng Gao\footnote{ School of Mathematics and Statistics, Xidian University, Xi’an 710126, P.R. China. Email: \texttt{gaozhipeng@xidian.edu.cn}.}%
\and
  Yixuan Huang\footnote{Department of Mathematics, Vanderbilt University, 1326 Stevenson Center, Nashville, Tennessee 37240, USA. Email: \texttt{yixuan.huang.2@vanderbilt.edu}.}%
\and
  Xiangzhou Liu\footnote{Department of Mathematics, Tiangong University, Tianjin 300071, P.R. China. Email: \texttt{i19991210@163.com}.}%
\and
  Songling Shan\footnote{Department of Mathematics and Statistics, Auburn University,  Auburn, Alabama 36849, USA. Email: \texttt{szs0398@auburn.edu}. Supported by NSF grant DMS-2345869.}%
\and
  Simon \v Spacapan\footnote{University of Maribor, FME, Maribor, Slovenia and 
  IMFM, Ljubljana, Slovenia. Email: \texttt{simon.spacapan@um.si}. Supported by ARIS program P1-0297, project N1-0218, and grant BI-US/22-24-77.}%
\and
 Jun Yue\footnote{ Department of Mathematics, Tiangong University, Tianjin 300071, P.R. China. Email: \texttt{yuejun06@126.com}.}%
}
\date{11 October 2024}
\begin{document}

\maketitle

\begin{abstract}
Let $G$ be a graph (with multiple edges allowed) and let $T$ be a tree in $G$.
We say that $T$ is \emph{even} if every leaf of $T$ belongs to the same part of the bipartition of $T$, and that 
$T$ is \emph{weakly even} if every leaf of $T$ that has maximum degree in $G$ belongs to the same part of the bipartition of $T$. We confirm two recent conjectures of Jackson and Yoshimoto by showing that every connected graph that is not a regular bipartite graph has a spanning weakly even tree. 
\bigskip 

{\bf Keywords}: Even tree; Weakly even tree; 2-factor; Weak 2-factor. 
\end{abstract}

\section{Introduction}

In this paper graphs are finite and may contain multiple edges but not loops.
We use $uv$ to denote an edge from $u$ to $v$; if there is more than one such edge, which edge we mean will either not matter or be clear from context.
Let $T$ be a tree in a graph $G$.
We say that $T$ is \emph{even} if all leaves of $T$ belong to the same part of the bipartition of $T$.
More generally, we say that $T$ is \emph{weakly even} if all leaves of $T$ that have maximum degree in $G$ belong to the same part of the bipartition of $T$.

For our proofs it is convenient to consider a specific ordered bipartition $(X,Y)$ of a tree $T$ in $G$ and to insist that the leaves of $T$ with maximum degree in $G$ belong to $X$.  We introduce some appropriate terminology.
Given an ordered bipartition $(X,Y)$ of a bipartite graph $H$, a vertex of $H$ is \emph{type-$0$} or \emph{type-$1$} if it belongs to $X$ or $Y$, respectively.
If $w \in V(H)$ and $\lambda \in \{0,1\}$, the \emph{$(w,\lambda)$-bipartition} of $H$ is the bipartition of $H$ for which $w$ has type $\lambda$.  If $H$ is equipped with this bipartition, we say $H$ is a \emph{$(w,\lambda)$-graph} (or \emph{$(w,\lambda)$-tree}, \emph{$(w,\lambda)$-cycle}, etc., as appropriate).
A tree $T \subseteq G$ with ordered bipartition $(X,Y)$ is \emph{even} if all leaves of $T$ are type-$0$ (belong to $X$), and \emph{weakly even} if all leaves of $T$ that have maximum degree in $G$ are type-$0$.

S.~Saito asked which regular connected graphs have a spanning even tree. 
Jackson and Yoshimoto~\cite{Jackson-Yoshimoto} obtained the following partial answer to the question.

\begin{THM}[\cite{Jackson-Yoshimoto}]
 \label{thm:j-y}
 Suppose $G$ is a regular nonbipartite connected graph that has a 2‐factor,
$w \in  V (G)$ and $\lambda \in\{0, 1\}$. Then G has a spanning even $(w,\lambda)$-tree.
\end{THM}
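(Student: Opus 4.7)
The plan is to exploit the 2-factor by building the spanning tree one cycle at a time. Let $F=C_1\cup\cdots\cup C_m$ be a 2-factor of $G$ with $w\in C_1$. Since $G$ is connected, we may order the cycles so that for each $i\ge 2$ some edge $f_i\in E(G)\setminus E(F)$ joins $C_i$ to $C_1\cup\cdots\cup C_{i-1}$. The target spanning tree has $n-1$ edges, and I will obtain it by deleting a single edge from each cycle (removing $m$ edges from $F$) and inserting the $m-1$ connectors $f_2,\ldots,f_m$; the result is a tree provided the connectors form a spanning tree of the ``cycle graph.''

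\textbf{Key steps.} First, I would construct the starting path $T_1$ by deleting an edge of $C_1$, choosing that edge so that both endpoints of $T_1$ are type-$0$ in the $(w,\lambda)$-bipartition. Then, for $i=2,\ldots,m$, I would extend $T_{i-1}$ to $T_i$ by adding $f_i=uv$ (with $u\in T_{i-1}$ and $v\in C_i$) and deleting an edge of $C_i$ so as to preserve the invariant that every leaf of the partial tree is type-$0$. The controlling parity calculation is: adding $f_i$ forces $v$ to have type opposite to $\tau_u$, the type of $u$ in $T_{i-1}$, and thereafter types alternate along the copy of $C_i$ sitting inside $T_i$. Deleting an interior edge $xy$ of $C_i$ (not incident to $v$) creates two new leaves $x,y$, whose types agree if and only if $|C_i|$ is odd; deleting an edge of $C_i$ incident to $v$ creates a single new leaf of type $\tau_u$ when $|C_i|$ is even and of type $1-\tau_u$ when $|C_i|$ is odd. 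Thus for odd $|C_i|$ there is always a valid deletion, regardless of $\tau_u$.

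\textbf{Main obstacle.} The delicate case is an even cycle $C_i$: interior deletions yield leaves of opposite types, so I am forced to delete an edge of $C_i$ incident to $v$, and the resulting single leaf is type-$0$ only when $\tau_u=0$. The same dichotomy obstructs the initial step if $|C_1|$ is even, since then the two endpoints of any spanning path of $C_1$ necessarily have opposite types. This is precisely where the nonbipartiteness hypothesis is used: if every cycle of $F$ were even then $F$ would be bipartite and types would be fixed with no slack, whereas nonbipartiteness produces an edge of $G\setminus E(F)$ whose endpoints share a type in the $F$-bipartition, and such an edge can be deployed either as a connector or as a reroute to a type-$0$ anchor vertex, breaking the parity deadlock. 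The crux of the proof is to orchestrate the cycle ordering, the choices of connectors $f_i$, and the cycle-edge deletions so that all parity constraints are met at once; this requires a careful case analysis driven by $|C_1|$ and by where the ``nonbipartite'' edges attach to the cycles of $F$.
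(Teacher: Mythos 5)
This statement is quoted from Jackson--Yoshimoto and not reproved in the paper, but the paper's proof of Theorem~\ref{thm:spanning weak good tree} is explicitly modelled on the original argument, so that is the natural benchmark. Your setup (a 2-factor, one deleted edge per cycle, one connector per cycle) and your parity bookkeeping are correct as far as they go: odd cycles can always be attached with all new leaves type-$0$, and even cycles force the single new leaf to inherit the type of the attachment vertex $u$. But the proof stops exactly where the theorem begins: the entire content of the result is the resolution of the ``parity deadlock'' at even cycles, and you defer it to ``a careful case analysis'' without supplying it. That is a genuine gap, not a routine verification. Moreover, the specific mechanism you gesture at --- a single edge of $G\setminus E(F)$ whose ends ``share a type in the $F$-bipartition'' --- is not well posed and not sufficient. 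If $F$ contains an odd cycle, $G$ is automatically nonbipartite and such an edge need not exist anywhere useful; if all cycles of $F$ are even, the bipartition of $F$ is not canonical (each even cycle admits two $2$-colourings), and choosing these colourings is itself part of the problem. A greedy cycle ordering can get irrevocably stuck: an even cycle may be reachable only through type-$1$ vertices, and fixing this may require re-colouring or re-routing through a whole chain of previously attached even cycles, which a one-pass construction cannot do.

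Concretely, what the paper (following Jackson--Yoshimoto) does and your sketch lacks: (i) it takes a \emph{maximal} partial tree rather than a greedily ordered one; (ii) it introduces \emph{admissible sequences} of even cycles and optimizes the choice of bipartition of the union of unused even cycles to maximize the number of reachable cycles; (iii) when stuck, it shows the type-$1$ vertices $\Ya$ of the reachable even cycles all have degree $\Delta(G)$ and, if independent, applies a counting argument (Lemma~\ref{lem:Y-out-edge}: $\Delta(G)|\Ya| \ge \Delta(G)|\Xa| \ge \sum_{x\in\Xa} d(x)$ forces $G$ to be regular bipartite) to produce an escape edge out of $\Ya$; and (iv) if $\Ya$ is not independent, it uses a chord $uv$ with $u,v\in\Ya$ to re-attach $u$ so that $u$ \emph{changes type} in the new tree --- a construction that deletes two edges of a cycle and adds two edges, which your rigid ``one deletion, one connector per cycle'' edge count cannot accommodate. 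Without steps (ii)--(iv), or some substitute for them, the argument does not close.
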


They  conjectured that connected regular bipartite graphs are the only connected graphs that do not have a spanning even tree.

\begin{CON}
    \label{conj:regular-even-sp}
    Every regular nonbipartite connected graph has a spanning even tree.
\end{CON}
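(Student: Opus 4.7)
The plan is to split by the parity of the regularity degree $d$ and reduce to Theorem~\ref{thm:j-y} whenever possible. If $d$ is even, Petersen's classical $2$-factor theorem guarantees that $G$ has a $2$-factor, so since $G$ is connected and nonbipartite, Theorem~\ref{thm:j-y} immediately yields a spanning even $(w,\lambda)$-tree for any prescribed root $w$ and type $\lambda \in \{0,1\}$. Thus the real content of Conjecture~\ref{conj:regular-even-sp} lies in the case $d$ odd, where $G$ need not have a $2$-factor at all.

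For odd $d$ the key idea is to introduce a relaxation of a $2$-factor, a \emph{weak $2$-factor}, whose vertex degrees are all equal to $2$ except possibly at one designated vertex $w$, where the degree may differ from $2$ in a controlled way (for instance, be $0$ or $4$). The argument then has two main steps.
\begin{enumerate}
\item[(i)] Show, via Tutte's $f$-factor theorem, that every connected nonbipartite odd-regular graph has a weak $2$-factor anchored at any prescribed vertex $w$. Whenever Tutte's condition for a genuine $2$-factor fails, there is a Tutte-type barrier $S$ whose removal yields many odd components; the combination of odd regularity and nonbipartiteness should pin down the structure of such a barrier enough to permit a local repair, at the cost of a single controlled defect near $w$.
\item[(ii)] Adapt the Jackson-Yoshimoto construction to a weak $2$-factor: contract each component of the weak $2$-factor, choose a spanning tree of the resulting (multi)graph, and expand the contracted components back into paths of the correct parity. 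The one potentially problematic leaf arising from the defect at $w$ is handled using an odd cycle of $G$ (available because $G$ is nonbipartite) to flip its parity, so that all leaves end up in the same part of the bipartition.
\end{enumerate}

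The principal obstacle I expect is step~(i): showing that every odd-regular nonbipartite connected graph really does admit a weak $2$-factor of the required type. This forces one to understand exactly which such graphs fail to have a genuine $2$-factor, and to verify that the failure is never so severe that a weak $2$-factor is also unavailable. Both hypotheses must be used essentially: dropping odd regularity removes the obstruction entirely via Petersen's theorem, while dropping nonbipartiteness admits natural counterexamples among regular bipartite graphs (whose spanning trees always have leaves in both parts). Once step~(i) is in place, step~(ii) should be a careful but broadly routine modification of the argument behind Theorem~\ref{thm:j-y}, with the odd cycle supplied by nonbipartiteness doing the parity bookkeeping.
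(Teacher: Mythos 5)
There is a genuine gap, and in fact the central claim of your plan, step~(i), is false as stated. First, your parity dichotomy misidentifies the obstruction: the classical $2$-factor theorem (Theorem~\ref{thm:2-factor} in the paper) gives a $2$-factor in \emph{every} $2$-edge-connected $r$-regular graph, $r \ge 2$, odd or even, so in the $2$-edge-connected case Theorem~\ref{thm:j-y} finishes immediately regardless of parity. The real difficulty is cut edges, and there your proposed ``weak $2$-factor anchored at one prescribed vertex $w$'' does not exist in general. Concretely: a spanning subgraph in which every vertex has degree $2$ except possibly one decomposes into cycles plus at most one defective component, and no cycle can use a cut edge. Take two adjacent vertices $v_1, v_2$ in a cubic graph, each of whose three incident edges is a bridge (attach each of $v_1, v_2$ to two nonbipartite pendant gadgets, e.g.\ $K_4$ minus an edge with a degree-$2$ apex). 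The resulting graph is connected, cubic, and nonbipartite, yet both $v_1$ and $v_2$ are forced to be defect vertices, so no single-defect weak $2$-factor exists; nesting the construction makes the number of unavoidable defects grow without bound. No Tutte-barrier ``local repair'' near one vertex can fix this. Step~(ii) is also only a sketch --- routing an odd cycle to the one bad leaf while keeping every other leaf on the correct side is precisely the delicate part of the Jackson--Yoshimoto argument --- but step~(i) is the fatal problem.

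The paper's route is structurally different and is worth internalizing. For $2$-edge-connected graphs, the regular case is dispatched at once by Theorem~\ref{thm:2-factor} plus Theorem~\ref{thm:j-y}; the elaborate weak-$2$-factor machinery (where path components are allowed, but only with endpoints of degree less than $\Delta(G)$ --- a notion that is vacuous for regular graphs) is needed only for the \emph{non}-regular case of Theorem~\ref{thm:spanning weak good tree}. Graphs with cut edges are then handled in Theorem~\ref{thm:general spanning weak good tree} by induction on a cut edge $x_1x_2$: the two pieces $G_1, G_2$ are no longer regular (each $x_i$ loses degree), which is exactly why one cannot prove Conjecture~\ref{conj:regular-even-sp} by staying inside the class of regular graphs and must instead prove the stronger weakly even statement, Conjecture~\ref{conj:weak-even-sp}, and deduce Conjecture~\ref{conj:regular-even-sp} as a corollary. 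Your proposal has no mechanism for this self-strengthening, and without it the bridge case cannot be closed.
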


As an extension of Conjecture~\ref{conj:regular-even-sp}, Jackson and Yoshimoto~\cite{Jackson-Yoshimoto}  also posed the following conjecture.

\begin{CON}
    \label{conj:weak-even-sp}
    Every connected graph that is not a regular bipartite graph has a spanning  weakly even tree.
\end{CON}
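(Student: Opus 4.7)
My plan is to parallel the proof of Theorem~\ref{thm:j-y}, but with the $2$-factor replaced by a weaker spanning structure adapted to the ``weakly even'' target. Call a vertex-disjoint union of cycles $C_1,\dots,C_k$ of $G$ a \emph{weak $2$-factor} if every maximum-degree vertex of $G$ lies on some $C_i$; vertices of non-maximum degree not covered by any $C_i$ are left isolated in this structure. The first main step is to show that every connected graph $G$ which is not regular bipartite and not a tree admits a weak $2$-factor. Trees are handled trivially: in a tree with $\Delta(G)\geq 2$ every leaf has degree $1<\Delta(G)$, so the tree is already its own spanning weakly even tree. For the cyclic case, the existence of a weak $2$-factor should follow from a Tutte- or Lov\'asz-style $(g,f)$-factor theorem applied with $g(v)=f(v)=2$ at maximum-degree vertices and $g(v)=0$, $f(v)=2$ elsewhere; I expect ``$G$ is a regular bipartite graph'' to appear exactly as the obstruction to the associated deficiency condition.

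Given a weak $2$-factor, I would contract each $C_i$ to a single vertex $c_i$ to form a connected multigraph $G^*$, pick any spanning tree $T^*$ of $G^*$, and lift $T^*$ to a spanning tree $T$ of $G$ by removing one edge from each $C_i$ (leaving a Hamilton path $P_i$ of $C_i$) and adjoining the lifted edges of $T^*$. A leaf of $T$ is then either (i) a non-maximum-degree vertex isolated in the weak $2$-factor, which imposes no parity constraint, or (ii) an endpoint of some $P_i$ whose contracted vertex $c_i$ is a leaf of $T^*$; in case~(ii) the leaf can be placed at either of the two neighbours of the attachment vertex along $C_i$, and these two neighbours lie in the same bipartition class of $C_i$ iff $C_i$ is even.

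The final and most delicate step is to choose the deleted edge in each $C_i$ so that $T$ is weakly even with respect to the prescribed $(w,\lambda)$-bipartition. Odd cycles in the weak $2$-factor provide parity toggles, while non-maximum vertices isolated in the weak $2$-factor absorb parity constraints for free. I would root $T^*$ at $w$ and propagate parity obligations from the leaves of $T^*$ upward, resolving each conflict either at an odd $C_i$ (when $G$ is nonbipartite) or at a non-maximum isolated vertex (when $G$ is bipartite but not regular). The main obstacle is the near-regular nonbipartite case, where odd cycles must be placed carefully: I anticipate needing an exchange argument---swapping edges between a $C_i$ and an auxiliary odd closed walk of $G$ to reposition oddness where the parity analysis demands it---to push the proof through, supplemented by ad hoc handling of small cases and of graphs whose maximum-degree vertices induce a very restrictive structure.
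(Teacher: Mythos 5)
There are two genuine gaps. First, your proposed spanning structure---a disjoint union of cycles covering every maximum-degree vertex---need not exist in a connected graph that is neither a tree nor regular bipartite. Take a vertex $v$ joined by a single edge to one vertex of each of three otherwise disjoint triangles: then $\Delta(G)=3$, $v$ has maximum degree, every edge at $v$ is a cut edge, so $v$ lies on no cycle at all, and no union of disjoint cycles can cover it. Correspondingly, the deficiency condition of your $(g,f)$-factor formulation fails here even though the graph is far from regular bipartite, so ``regular bipartite'' is not the obstruction you predict. The paper avoids this by (i) first reducing to the $2$-edge-connected case via an induction over cut edges (Theorem~\ref{thm:general spanning weak good tree}), and (ii) using a more permissive notion of weak $2$-factor that allows \emph{path} components whose endvertices have non-maximum degree, obtained not from a $(g,f)$-factor theorem but from a doubling trick: join $G$ to a second copy of itself by $\Delta(G)-d(v)$ parallel edges at each deficient vertex, apply Theorem~\ref{thm:2-factor} to the resulting regular graph, and restrict the $2$-factor back to $G$.

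Second, the contract-and-lift step hides the real difficulty and, as described, cannot work locally. If an even cycle $C_i$ is attached to the rest of the tree at a single vertex $z$, then to contribute only one leaf you must delete an edge of $C_i$ at $z$, and that leaf is $z^+$ or $z^-$, both of which lie in the class of $C_i$ \emph{opposite} to $z$; there is no parity choice for even cycles, contrary to what your sketch suggests. The hard case is precisely when every reachable even cycle has its ``wrong'' side consisting entirely of independent maximum-degree vertices, and no odd cycle or deficient vertex is available to absorb the conflict. Escaping that case requires a global counting argument: the paper's Lemma~\ref{lem:Y-out-edge} (an independent set $Y$ of maximum-degree vertices with $|X|\le|Y|$ must send an edge outside $X\cup Y$ unless $G$ is regular bipartite), combined with a re-bipartitioning of the even cycles and the ``admissible sequence'' machinery in the proof of Theorem~\ref{thm:spanning weak good tree}. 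Your proposal has no analogue of this lemma, and the ``exchange argument plus ad hoc small cases'' you anticipate is exactly the part that needs a genuinely new idea; as written the proof does not go through.
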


Conjecture~\ref{conj:weak-even-sp} implies Conjecture~\ref{conj:regular-even-sp}: in a regular connected graph, every vertex has maximum degree, and so a spanning tree is weakly even if and only if it is  even.
In this paper we confirm Conjecture~\ref{conj:weak-even-sp} and hence Conjecture~\ref{conj:regular-even-sp}.
Our result combines work by Ai, Gao, Liu, and Yue \cite{AGLY24arxiv}, and by Ellingham, Huang, Shan, and \v{S}pacapan \cite{EHSS24arxiv}.

Most of the work to confirm Conjecture~\ref{conj:weak-even-sp} is in the proof of the following theorem, which we postpone to the next section.

\begin{THM}\label{thm:spanning weak good tree}
Let $G$ be a $2$-edge-connected graph that is not regular bipartite, $w \in V(G)$ and $\lambda \in \{0,1\}$.
Then $G$ has a spanning weakly even $(w,\lambda)$-tree.
\end{THM}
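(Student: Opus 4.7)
My plan is to case-split on whether $G$ is regular, since the two cases require different techniques.  If $G$ is regular it must be nonbipartite (as $G$ is not regular bipartite by assumption), and every vertex has maximum degree, so a weakly even tree in $G$ is exactly an even tree.  If $G$ is non-regular, then vertices of non-maximum degree provide parity-free flexibility that can be exploited.

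In the regular case I would try to reduce to Theorem~\ref{thm:j-y} by producing a 2-factor.  Even-regular 2-edge-connected graphs admit a 2-factor by Petersen's classical theorem, and bridgeless cubic graphs admit a perfect matching whose removal gives a 2-factor.  For higher odd-regular 2-edge-connected graphs a 2-factor may not always exist, and I would fall back on a direct construction, essentially the same weak $2$-factor argument used below for the non-regular case, but simplified by the fact that every vertex is equivalent under weak evenness.

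In the non-regular case, let $\Delta = \Delta(G)$ and $M = \{v \in V(G) : \deg_G(v) = \Delta\}$, so $V(G) \setminus M \neq \emptyset$.  I would look for a \emph{weak $2$-factor} $F$ of $G$: a spanning subgraph in which every $M$-vertex has degree exactly $2$, so that $F$ consists of cycles through $M$-vertices, possibly glued via tree-like attachments at non-$M$ vertices.  From $F$ I would delete one edge per cycle and reconnect through non-$M$ vertices to form a spanning tree $T$.  The weak evenness of $T$ is controlled by the choice of which edge to delete from each cycle: odd cycles in $F$, or parity-adjusting swaps at non-$M$ vertices, can flip the type of any misplaced $M$-leaf.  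The $(w,\lambda)$ prescription is handled analogously, perhaps by a final odd-length detour or rerouting through a non-$M$ vertex.

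The main obstacle is the existence of a weak $2$-factor of the right form.  I would try to establish it via a degree-constrained factor theorem -- Tutte's $f$-factor or Lov\'asz's $(g,f)$-factor -- applied with tight degree requirements at $M$ and loose ones at non-$M$ vertices, using 2-edge-connectivity to discharge the odd-component condition that arises.  A secondary challenge is to retain enough structural information about $F$ (how its cycles meet, which non-$M$ vertices are accessible for reconnection, where odd cycles reside) to guarantee that the deletion-and-reconnection step yields a \emph{connected} tree with the correct parities at all $M$-leaves and at $w$.  Synthesizing these existence and structural statements, and coordinating them with the $(w,\lambda)$ constraint, is where I expect the bulk of the technical work to lie.
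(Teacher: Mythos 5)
Your high-level skeleton matches the paper's: split on regularity, handle the regular case via a $2$-factor and Theorem~\ref{thm:j-y}, and in the non-regular case build a weak $2$-factor and then assemble a spanning tree from its components. Two remarks on the easier parts first. Your worry that odd-regular $2$-edge-connected graphs may lack a $2$-factor is unfounded: a classical theorem (Theorem~\ref{thm:2-factor} in the paper, due to Petersen, B\"abler, et al.) gives a $2$-factor in every $2$-edge-connected $r$-regular graph with $r\ge 2$, so the regular case reduces cleanly to Theorem~\ref{thm:j-y} with no fallback construction needed. For the existence of a weak $2$-factor the paper avoids $(g,f)$-factor machinery entirely: it takes two copies of $G$, joins each deficient vertex $v$ to its clone by $\Delta(G)-d(v)$ parallel edges, applies Theorem~\ref{thm:2-factor} to the resulting $\Delta(G)$-regular multigraph (which has at most one cutedge), and intersects the resulting $2$-factor with $E(G)$; this yields a spanning subgraph whose components are cycles and paths whose ends have degree less than $\Delta(G)$. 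Note also that the degree conditions you impose (degree exactly $2$ only at maximum-degree vertices, unconstrained elsewhere) do not by themselves force the clean ``cycles plus tree-like attachments'' structure you describe.

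The genuine gap is in the reconnection step, which is where essentially all of the difficulty of the theorem lives. ``Delete one edge per cycle and reconnect, using odd cycles or parity-adjusting swaps to fix misplaced leaves'' fails locally: an even cycle $C$ of the weak $2$-factor may be reachable from the partial tree only through vertices of one colour class of $C$, and that class may consist entirely of maximum-degree vertices forming an independent set, in which case every way of opening $C$ into an attached path creates a maximum-degree leaf of the wrong type and no local repair exists. The paper's resolution is global: it fixes a maximal ``good'' weakly even $(w,\lambda)$-tree $T$ whose vertex set is a union of components of the weak $2$-factor, defines \emph{admissible sequences} of leftover even cycles reachable from $T$ through correctly-coloured edges, chooses the bipartition of those cycles so as to maximize the number of admissible cycles, and then applies a counting lemma (Lemma~\ref{lem:Y-out-edge}, which is exactly where the hypothesis that $G$ is not regular bipartite enters) to the union of all admissible cycles to produce an escape edge; a further case analysis converts that edge into a strictly larger good tree, contradicting maximality. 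Your proposal names this obstacle but supplies none of these ingredients, so as it stands it is an outline of the routine half of the proof with the hard half deferred.
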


Using Theorem \ref{thm:spanning weak good tree} we can prove  Theorem \ref{thm:general spanning weak good tree}, which verifies Conjecture \ref{conj:weak-even-sp}.

\begin{THM}\label{thm:general spanning weak good tree}
Let $G$ be a connected graph that is not regular bipartite, $w \in V(G)$ and $\lambda \in \{0,1\}$.
Then $G$ has a spanning weakly even $(w,\lambda)$-tree.
\end{THM}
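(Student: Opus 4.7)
The plan is to reduce Theorem~\ref{thm:general spanning weak good tree} to Theorem~\ref{thm:spanning weak good tree} via the bridge decomposition of $G$. If $G$ is 2-edge-connected, Theorem~\ref{thm:spanning weak good tree} applies immediately, so I may assume $G$ has at least one bridge and hence at least two 2-edge-connected components $B_1,\dots,B_k$ ($k\ge 2$), joined by the bridges of $G$ in a tree-like fashion. Any spanning tree $T$ of $G$ must contain every bridge and restrict to a spanning tree $T_i$ of each $B_i$, so my plan is to build the $T_i$'s block-by-block (using Theorem~\ref{thm:spanning weak good tree} where it is available) and then glue them along the bridges.

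Let $M$ denote the set of vertices of maximum $G$-degree and let $A_i\subseteq V(B_i)$ denote the vertices of $B_i$ incident to bridges of $G$. The crucial observation is that $d_G(v)=d_{B_i}(v)$ for $v\in V(B_i)\setminus A_i$, whereas $d_G(v)>d_{B_i}(v)$ for $v\in A_i$. From this I derive two consequences. First, if $\Delta(B_i)=\Delta(G)$, then the max-degree vertices of $B_i$ are exactly $M\cap(V(B_i)\setminus A_i)$, and moreover $B_i$ cannot be regular bipartite, because $k\ge 2$ forces $A_i\ne\emptyset$ and a regular bipartite $B_i$ with $\Delta(B_i)=\Delta(G)$ would yield a vertex of $A_i$ with $d_G>d_{B_i}=\Delta(G)$, a contradiction. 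Second, if $\Delta(B_i)<\Delta(G)$, then every vertex of $V(B_i)\setminus A_i$ has $d_G=d_{B_i}<\Delta(G)$, so $M\cap(V(B_i)\setminus A_i)=\emptyset$.

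Next I would root the block tree at the block $B_{i_0}$ containing $w$ and process blocks in top-down order. Set $w_{i_0}:=w$ and $\lambda_{i_0}:=\lambda$. For each non-root block $B_i$ with parent $B_{p(i)}$ joined to it by a bridge $u_iv_i$ (where $u_i\in B_{p(i)}$ and $v_i\in B_i$), set $w_i:=v_i$ and let $\lambda_i$ be the opposite of the type of $u_i$ in the already-constructed bipartition of $T_{p(i)}$. For each block $B_i$ with $|V(B_i)|\ge 2$: if $\Delta(B_i)=\Delta(G)$, invoke Theorem~\ref{thm:spanning weak good tree} (applicable by the first consequence) to obtain a spanning weakly even $(w_i,\lambda_i)$-tree of $B_i$; otherwise take any spanning tree of $B_i$ and orient its bipartition so that $w_i$ has type $\lambda_i$. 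Single-vertex blocks need no tree. Let $T$ be the union of all the $T_i$'s together with the bridges of $G$.

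By the recursive choice of the $\lambda_i$'s, the bipartitions of the $T_i$'s agree across every bridge, so $T$ carries a well-defined bipartition in which $w$ has type $\lambda$. A vertex of $A_i$ with $|V(B_i)|\ge 2$ has both an incident bridge and an incident edge of $T_i$, so it has $T$-degree at least $2$ and is not a leaf of $T$; the only other way a vertex of some $A_i$ can be a leaf of $T$ is to be a pendant of $G$, but since $G$ is not regular bipartite we have $\Delta(G)\ge 2$, so pendants are not in $M$. Hence every leaf of $T$ lying in $M$ is a leaf of some $T_i$ inside $V(B_i)\setminus A_i$ for a block with $\Delta(B_i)=\Delta(G)$, and there the weakly-even property of $T_i$ (applied to the max-degree vertices of $B_i$, which by the first consequence equal $M\cap(V(B_i)\setminus A_i)$) places it in type~$0$. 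The main obstacle is the mismatch between ``max $G$-degree'' and ``max $B_i$-degree'', combined with the possibility that a $B_i$ might be regular bipartite; both are dissolved by the single observation that a bridge at a vertex strictly inflates its $G$-degree above its $B_i$-degree, simultaneously pinning the two max-degree sets together on $V(B_i)\setminus A_i$ and ruling out regular bipartiteness in exactly the blocks where Theorem~\ref{thm:spanning weak good tree} is needed.
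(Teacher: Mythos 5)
Your proposal is correct and is essentially the paper's argument: the paper peels off one cutedge at a time by induction on $|V(G)|$, whereas you perform the full bridge-block decomposition in a single pass, but both reduce to Theorem~\ref{thm:spanning weak good tree} on the bridgeless pieces and both hinge on the same observation that a bridge endpoint has strictly smaller degree inside its piece than in $G$, which simultaneously rules out the regular-bipartite obstruction where the theorem is needed and exempts those vertices from the leaf condition after gluing. The only cosmetic difference is that the paper keys its case split on whether a component is regular bipartite (then $\Delta(G_i)<\Delta(G)$ and any spanning tree works) while you key it on whether $\Delta(B_i)=\Delta(G)$; the content is the same.
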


\begin{proof}
We proceed by induction on $|V(G)|$.  If $|V(G)| \le 2$ then $G$ is regular bipartite, and the theorem holds vacuously.  Therefore, we may assume that $|V(G)| \ge 3$, which implies $\Delta(G) \ge 2$, and that the theorem holds for graphs of smaller order than $G$.  If $G$ is $2$-edge-connected, then the theorem holds by Theorem \ref{thm:spanning weak good tree}, so we may assume that $G$ has a cutedge $x_1 x_2$.  Let $G_1$ and $G_2$ be the components of $G-x_1x_2$, with $x_1 \in V(G_1)$, $x_2 \in V(G_2)$.

\begin{unCLA}
Let $i \in \{1,2\}$, $w_i \in V(G_i)$, and $\lambda_i \in \{0,1\}$.  Then there is a spanning $(w_i, \lambda_i)$-tree $T_i$ of $G_i$ that is weakly even in $G$ except possibly at $x_i$ (i.e., no vertex of $T_i$ except possibly $x_i$ is a type-$1$ leaf of $T_i$ with maximum degree in $G$).
\end{unCLA}

\begin{proof}[Proof of Claim]
Note that all vertices of $G_i$ have the same degree in $G_i$ as in $G$, except $x_i$.
If $G_i$ is regular bipartite then $\Delta(G_i) < \Delta(G)$.  Hence any spanning $(w_i,\lambda_i)$-tree $T_i$ of $G_i$ is weakly even in $G$ except possibly at $x_i$.  If $G_i$ is not regular bipartite, then by the induction hypothesis there is a spanning $(w_i,\lambda_i)$-tree $T_i$ of $G_i$ that is weakly even in $G_i$, and hence weakly even in $G$ except possibly at $x_i$.
\end{proof}

We may assume that $w \in V(G_1)$.  By the Claim $G_1$ has a spanning $(w,\lambda)$-tree $T_1$ that is weakly even in $G$ except possibly at $x_1$.  Let $\lambda_2$ be the type opposite to the type of $x_1$ in $T_1$.  By the Claim, $G_2$ has a spanning $(x_2,\lambda_2)$-tree $T_2$ that is weakly even in $G$ except possibly at $x_2$.  The bipartitions of $T_1$ and $T_2$ agree with the $(w,\lambda)$-bipartition of $T=T_1 \cup T_2 \cup \{x_1x_2\}$.  If either $x_1$ or $x_2$ is a leaf of $T$ then it has degree $1 < \Delta(G)$ in $G$, and all other leaves of $T$ satisfy the weakly even condition in $G$, so $T$ is a spanning weakly even tree in $G$.
\end{proof}

\section{Proof of Theorem \ref{thm:spanning weak good tree}}

We start with some preliminaries. We assume that every cycle in a graph $G$ has a fixed orientation.  When discussing a particular cycle $C$ and $u, v \in V(C)$ we use $u^-$ and $u^+$ to denote the immediate predecessor and successor, respectively, of $u$ on $C$, and $uCv$ to mean the subpath of $C$ from $u$ to $v$ following the orientation of $C$ ($uCv$ is a single vertex if $u=v$).
A spanning subgraph $H$ of $G$ is a \emph{weak 2-factor} if each component of $H$ is either a cycle or a path (possibly a single vertex), and the endvertices of the path components of $H$ have  degree less than $\Delta(G)$ in $G$.
For any positive integer $k$, let  $[k]=\{1,\ldots, k\}$.

\begin{THM}[\cite{Bae37, MR1670596, Pet1891}]\label{thm:2-factor}
If $G$ is a connected $r$-regular graph, $r \ge 2$, and $G$ is $2$-edge-connected or has at most $r-1$ cutedges, then $G$ has a $2$-factor.
\end{THM}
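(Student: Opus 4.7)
The plan is to split by the parity of $r$. When $r = 2k$ is even, the $2$-edge-connectivity hypothesis is not needed: since $G$ is connected with every vertex of even degree, $G$ has an Eulerian circuit. Orient each edge along this circuit, so that every vertex has in-degree and out-degree exactly $k$. Build the bipartite auxiliary graph $B$ with vertex classes $\{v^+ : v \in V(G)\}$ and $\{v^- : v \in V(G)\}$, with an edge $u^+ v^-$ for each arc $(u,v)$. Then $B$ is $k$-regular bipartite, so K\"onig's theorem produces a perfect matching in $B$. Translated back to $G$, this matching selects one in-arc and one out-arc at each vertex, which is precisely a $2$-factor.

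When $r \geq 3$ is odd, the plan is to exhibit a perfect matching $M$ of $G$ and then apply the even case. Once $M$ is found, $G - M$ is $(r-1)$-regular with $r - 1 \geq 2$ even; each component of $G - M$ is connected and even-regular and hence contains a $2$-factor by the previous case, and the union is a $2$-factor of $G$. To produce $M$, verify Tutte's $1$-factor condition $o(G - S) \leq |S|$ for every $S \subseteq V(G)$. Fix such $S$, let $H_1, \ldots, H_t$ be the odd components of $G - S$, and write $e_i = |E(S, V(H_i))|$. The identity $2|E(H_i)| + e_i = r|V(H_i)|$ shows $e_i$ is odd (both $r$ and $|V(H_i)|$ are odd), so $e_i = 1$ or $e_i \geq 3$. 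If $e_i = 1$ then the unique edge between $S$ and $H_i$ is a cutedge of $G$, so the number $b$ of such indices satisfies $b \leq r - 1$, with $b = 0$ when $G$ is $2$-edge-connected. Summing over $i$,
\[
3t - 2b \leq \sum_i e_i \leq r|S|,
\]
while the case $S = \emptyset$ is handled by noting that $|V(G)|$ is even (since $r|V(G)| = 2|E(G)|$ is even and $r$ is odd).

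The main obstacle is to upgrade this inequality to $t \leq |S|$ in every case, not merely the easy case $r = 3$, $b = 0$ of the original Petersen argument. The standard resolution refines the Petersen--B\"abler approach and inducts on the number of cutedges of $G$: at each step one peels off a pendant $2$-edge-connected block across a cutedge $e$, augments each side to restore $r$-regularity (introducing a new edge at the defect vertex where $e$ was removed), finds a perfect matching in each smaller regular graph by the induction hypothesis, and stitches the matchings together across $e$. The hypothesis $b \leq r - 1$ is precisely what ensures this peeling is always compatible with the parity constraints at each step, and the bookkeeping is the content of the classical references cited in the theorem statement.
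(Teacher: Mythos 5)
The paper offers no proof of this statement --- it is imported verbatim from the cited references (Petersen, B\"abler, and the third citation) and used as a black box --- so the only question is whether your argument stands on its own. Your even case does: the Euler-tour orientation plus K\"onig's theorem is the standard Petersen argument, and as you note it needs no edge-connectivity hypothesis.

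The odd case, however, is built on a false intermediate claim. You reduce to finding a perfect matching $M$ and then taking a $2$-factor of the $(r-1)$-regular graph $G-M$; but a connected $r$-regular graph with at most $r-1$ cutedges need not have a perfect matching once $r\ge 5$. For $r=5$, take a vertex $v$ and three $2$-edge-connected multigraphs $H_1,H_2,H_3$ of odd order, where $H_1$ and $H_2$ each have one vertex of degree $4$ and are joined to $v$ by a single edge, and $H_3$ has three vertices of degree $4$ and is joined to $v$ by three edges to those vertices (e.g.\ $H_1,H_2$ can be $K_5$ plus one doubled edge, and $H_3$ can be $K_5$ plus a different doubled edge). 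The result is $5$-regular, connected, and has exactly two cutedges, so it satisfies your hypotheses with $r-1=4$; yet $o(G-v)=3>1$, so Tutte's condition fails and $G$ has no perfect matching. This is precisely where your inequality $3t-2b\le r|S|$ falls short of $t\le|S|$, and no refinement can close the gap because the statement you are trying to prove is false. (For $r=3$ the computation does work, though you also need the parity observation $t\equiv|S|\pmod 2$ for nonempty $S$, which you invoke only for $S=\emptyset$.) Your closing paragraph, which defers the ``bookkeeping'' of the peeling induction to the cited references, is not a proof, and since that bookkeeping is in service of a perfect matching that does not exist, it cannot be repaired as stated. The classical proofs of the odd case instead verify Tutte's $2$-factor condition (via the $f$-factor theorem) directly, or follow B\"abler's edge-counting argument, without routing through a $1$-factor.
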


The following generalizes \cite[Lemma 6]{Jackson-Yoshimoto}.

\begin{LEM}\label{lem:Y-out-edge}
    Let  $G$ be a connected graph that is not regular  bipartite, and $Y \subseteq V(G)$ an independent set in $G$. Suppose that all   vertices of $Y$ have maximum degree in $G$. Then for any $X \subseteq V(G) \setminus Y$ with $|X| \le |Y|$, we have $E(Y,V(G)\setminus (X\cup Y)) \ne \emptyset$.
\end{LEM}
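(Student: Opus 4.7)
The plan is to proceed by contradiction. Suppose $E(Y, V(G) \setminus (X \cup Y)) = \emptyset$; write $\Delta = \Delta(G)$ and $Z = V(G) \setminus (X \cup Y)$. I would aim to derive from this assumption that $G$ must be either disconnected or regular bipartite, each of which contradicts the hypothesis.

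The main tool is a double count of the (multi-)edges between $Y$ and $X$. Since $Y$ is independent in $G$ and, under the contradiction assumption, no edge of $G$ joins $Y$ to $Z$, every edge-end at $Y$ lies on an edge into $X$. Because each $y\in Y$ has $\deg_G(y)=\Delta$, this gives $|E(Y,X)| = \Delta|Y|$. Counting at the other side, $|E(Y,X)| \le \sum_{x\in X}\deg_G(x) \le \Delta|X|$. Combining these with the hypothesis $|X| \le |Y|$ forces $|X|=|Y|$ and equality throughout the chain. In particular, every $x\in X$ must satisfy $\deg_G(x)=\Delta$ and have all of its edge-ends going to $Y$.

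The last conclusion says there is no edge between $X$ and $Z$, and also none inside $X$. Now two cases: if $Z \ne \emptyset$, then the assumption plus the previous sentence show that no edge of $G$ has one end in $Z$ and the other in $X\cup Y$, so $G$ is disconnected, contradicting the hypothesis. If $Z = \emptyset$, then $V(G)=X\cup Y$ with $X$ and $Y$ both independent sets (the independence of $X$ follows from the previous sentence, that of $Y$ is given) and every vertex of degree $\Delta$, so $G$ is a $\Delta$-regular bipartite graph, again a contradiction.

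I do not foresee a serious obstacle; the whole argument is essentially one double-counting step, and the right contradiction hypothesis makes the bookkeeping fall out immediately. The one place to be careful is the presence of multiple edges, but since $|E(Y,X)|$ and $\deg_G(\cdot)$ are both counted with multiplicity the identity $\sum_{y\in Y}\deg_G(y)=\Delta|Y|$ and the inequality $\sum_{x\in X}\deg_G(x)\le \Delta|X|$ go through verbatim in the multigraph setting.
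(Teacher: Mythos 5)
Your proof is correct and follows essentially the same double-counting argument as the paper: both force $|E(X,Y)|=\Delta|Y|\ge\Delta|X|\ge\sum_{x\in X}d(x)\ge|E(X,Y)|$, conclude equality throughout, and deduce that $G$ is $\Delta$-regular bipartite. Your explicit case split on whether $V(G)\setminus(X\cup Y)$ is empty just spells out the use of connectedness that the paper leaves implicit.
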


\begin{proof} 
Let $G,X$ and $Y$ be as given  in the lemma. If $E(Y, V(G) \setminus (X \cup Y)) = \emptyset$, then
\begin{align*}
    |E(X,Y)| &= \sum_{y \in Y } d(y) = \Delta(G) |Y|
     \ge \Delta(G) |X| \ge \sum_{x \in X} d(x) \\
    & = |E(X,Y)| + 2 |E(X)| + |E(X,V(G) \setminus (X \cup Y))| \ge |E(X,Y)|.
\end{align*}
Therefore, $|E(X)| = |E(X, V(G) \setminus (X\cup Y))| = 0$, $|X| = |Y|$, and $d(x) = \Delta(G)$ for all $x \in X$.
This implies that $G$ is a $\Delta(G)$-regular bipartite graph, a contradiction. 
\end{proof} 

Now we can prove Theorem \ref{thm:spanning weak good tree}.  The proof uses a similar argument to the original proof of Theorem \ref{thm:j-y} by Jackson and Yoshimoto.

\begin{proof}[Proof of Theorem~\ref{thm:spanning weak good tree}]
Let $G$ be a $2$-edge-connected graph that is not  regular bipartite, and 
let $w\in V(G)$ and $\lambda\in \{0,1\}$ be given.
If $G$ is regular, then since $G$ is $2$-edge-connected it has a $2$-factor by Theorem~\ref{thm:2-factor}, and hence a spanning even tree with $w$ of type $\lambda$ by Theorem \ref{thm:j-y}.
Assume therefore that $G$ is not a regular graph. Since $G$ is $2$-edge-connected, it follows that $\Delta(G) \ge 3$. 

We will find a weak 2-factor in $G$ and then construct a spanning weakly even tree by using most of the edges of  the weak 2-factor and some other edges.

\begin{CLA}\label{claim:weak-2-factor}
The graph $G$ has a weak 2-factor.
\end{CLA}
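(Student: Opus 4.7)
The plan is to lift the problem to a $\Delta(G)$-regular auxiliary multigraph, apply Theorem~\ref{thm:2-factor} there, and then project the resulting $2$-factor back to $G$.

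Write $\Delta = \Delta(G)$ and $D = \{v \in V(G) : d_G(v) < \Delta\}$. Since $G$ is not regular, $D \neq \emptyset$. First, I would take two vertex-disjoint copies $G_1, G_2$ of $G$, with natural bijections $\phi_i\colon V(G) \to V(G_i)$, and form $G^*$ by adding, for each $v \in D$, exactly $\Delta - d_G(v)$ parallel ``cross-edges'' between $\phi_1(v)$ and $\phi_2(v)$. By construction, every vertex of $G^*$ has degree $\Delta$, so $G^*$ is $\Delta$-regular.

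The key step is verifying that $G^*$ satisfies the hypotheses of Theorem~\ref{thm:2-factor}. Since each copy $G_i$ is $2$-edge-connected, no edge inside $G_i$ is a cutedge of $G^*$, so any cutedge must be a cross-edge. If $G^*$ has at least two cross-edges, then deleting any one of them still leaves another cross-edge joining $G_1$ and $G_2$, so no cross-edge is a cutedge and $G^*$ is $2$-edge-connected. If instead $G^*$ has exactly one cross-edge, then that edge is a cutedge and it is the only one, so $G^*$ has at most $1 \le \Delta - 1$ cutedges (using $\Delta \ge 3$). In either case, Theorem~\ref{thm:2-factor} yields a $2$-factor $F^*$ of $G^*$.

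Finally, I would let $F$ be the spanning subgraph of $G$ whose edge set corresponds (via $\phi_1$) to $E(F^*) \cap E(G_1)$. Each vertex $\phi_1(v)$ has degree $2$ in $F^*$, and some of those edges may be cross-edges, so $v$ has degree in $\{0,1,2\}$ in $F$. Hence every component of $F$ is a cycle or a path (possibly a single vertex), and the endvertices of its path components are exactly the vertices $v$ such that $\phi_1(v)$ is incident with at least one cross-edge in $F^*$. Since cross-edges exist only at copies of vertices in $D$, every such endvertex has degree less than $\Delta$ in $G$, and therefore $F$ is a weak $2$-factor. The main obstacle is controlling the cutedges of $G^*$; everything else is straightforward bookkeeping, and the hypothesis $\Delta(G) \ge 3$ is used precisely to absorb the single-cross-edge case.
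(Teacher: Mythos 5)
Your proposal is correct and follows essentially the same approach as the paper: double the graph, add $\Delta(G)-d_G(v)$ parallel cross-edges at each deficient vertex to get a $\Delta(G)$-regular multigraph with at most one cutedge, apply Theorem~\ref{thm:2-factor}, and restrict the resulting $2$-factor to one copy of $G$. Your case analysis of the cutedges is just a more explicit version of the paper's observation that $G^*$ has at most one cutedge, which suffices since $\Delta(G)\ge 3$.
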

\begin{proof}  Let $G'$ be another copy of $G$. 
For each $v\in V(G)$ with $d_G(v)<\Delta(G)$, add $\Delta(G)-d_G(v)$
edges joining $v$ and the copy of $v$ in $G'$. Denote by $G^*$
the resulting multigraph. 
Then $G^*$ is $\Delta(G)$-regular and has at most one cutedge. 
By Theorem~\ref{thm:2-factor}, 
$G^*$ has a $2$-factor $F^*$.
Let $F$ be the spanning subgraph of $G$ such that $E(F) = E(F^*) \cap E(G)$.
Since $F^*$ is a 2-factor,  each component of $F$ is either a cycle or a path.
Moreover, each endvertex of a path component of $F$ is incident with an edge in $E(F^*) \setminus E(G)$ and thus has degree less than $\Delta(G)$.
Therefore $F$ is a weak $2$-factor of $G$.
\end{proof} 

Fix a weak 2-factor $F$ of $G$.
A tree in $G$ is \emph{good} (with respect to $F$) if its vertex set is the union of vertex sets of some components of $F$.
Let $T$ be a good weakly even $(w,\lambda)$-tree in $G$ of maximum order; if no such tree exists, let $T$ be the null graph.
We claim that $T$ is a spanning tree of $G$.
We suppose that $V(T) \ne V(G)$ and show that this always leads to a contradiction.

\begin{CLA}\label{claim:w-cycle-even}
Suppose that $T$ is null.
Then the component $C_0$ of $F$ containing $w$ is an even cycle.
If $(X_0, Y_0)$ is the $(w,\lambda)$-bipartition of $C_0$, then all vertices of $Y_0$ have maximum degree in $G$, and $Y_0$ is an independent set in $G$.
\end{CLA}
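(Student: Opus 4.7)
The plan is to establish the three conclusions by contrapositive: if $C_0$ is not an even cycle, or $Y_0$ contains a vertex of degree below $\Delta(G)$, or $Y_0$ is not independent in $G$, then we can exhibit a good weakly even $(w,\lambda)$-tree, contradicting the hypothesis that $T$ is null.

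For the shape of $C_0$: if $C_0$ is a path component of $F$, then $C_0$ is itself a tree whose endvertices have degree less than $\Delta(G)$ by the definition of weak $2$-factor, so $C_0$ equipped with the $(w,\lambda)$-bipartition is vacuously weakly even. If $C_0$ is an odd cycle $v_1v_2\cdots v_{2k+1}v_1$, then removing an edge $v_iv_{i+1}$ yields a spanning path of $C_0$ whose two leaves share a bipartition side (since the path length $2k$ is even); a short parity computation using the position of $w$ in the resulting path shows that $i$ can be chosen so that the two leaves both receive type $0$ while $w$ receives type $\lambda$, producing a spanning even---hence weakly even---$(w,\lambda)$-tree of $C_0$.

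For the degrees in $Y_0$: assume $C_0$ is an even cycle and some $y \in Y_0$ satisfies $d_G(y) < \Delta(G)$. Deleting either $C_0$-edge at $y$ gives a spanning path of $C_0$ whose bipartition is inherited from $(X_0,Y_0)$; its leaves are $y$ (type $1$) and an adjacent vertex of $X_0$ (type $0$). The only type-$1$ leaf is $y$, which is not of maximum degree, so the path is a good weakly even $(w,\lambda)$-tree.

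For the independence of $Y_0$, suppose $y_1 y_2 \in E(G)$ with $y_1, y_2 \in Y_0$; the edge is not in $C_0$ because $(X_0, Y_0)$ places both endpoints on the same side. Choose $y \in \{y_1, y_2\} \setminus \{w\}$, nonempty because $y_1 \ne y_2$, and let $y^*$ denote the other endpoint of the chord. Form $T^*$ from $C_0$ by deleting both $C_0$-edges at $y$ and inserting the chord $yy^*$; then $y$ becomes a pendant attached to $y^*$, and $V(C_0) \setminus \{y\}$ is traversed by a path between the two former $C_0$-neighbors of $y$ (both in $X_0$), so $T^*$ has exactly three leaves, namely $y$ and these two $X_0$-vertices. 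The path portion of $T^*$ inherits the $(X_0, Y_0)$-bipartition of $C_0$ restricted to $V(C_0) \setminus \{y\}$, and the chord $yy^*$ forces $y$ onto the opposite side from $y^*$, so the bipartition of $T^*$ is $\bigl(X_0 \cup \{y\},\ Y_0 \setminus \{y\}\bigr)$ and all three leaves lie in $X_0 \cup \{y\}$. The main obstacle---and the reason for requiring $y \ne w$---is to verify that in the $(w,\lambda)$-bipartition of $T^*$ this leaf-side is the type-$0$ side: if $\lambda = 0$ then $w \in X_0 \subseteq X_0 \cup \{y\}$, and if $\lambda = 1$ then $w \in Y_0 \setminus \{y\}$ (using $w \ne y$), so in either case making $w$ have type $\lambda$ places all three leaves into the type-$0$ part. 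Hence $T^*$ is weakly even, the final contradiction.
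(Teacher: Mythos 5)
Your proposal is correct and follows essentially the same route as the paper's proof: the path and odd-cycle cases are dispatched by the same constructions (the paper explicitly takes $w^+C_0w$ or $w^{++}C_0w^+$ where you invoke a parity argument for the choice of deleted edge), the degree claim uses the same path $u^+C_0u$ with the low-degree vertex as the lone type-$1$ leaf, and the independence claim uses the same pendant-via-chord tree with the same precaution of choosing the pendant vertex distinct from $w$.
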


\begin{proof}
If $C_0$ is a path, then both ends of $C_0$ have degree less than $\Delta(G)$ in $G$.  Therefore, $C_0$ is a good even $(w,\lambda)$-tree, which contradicts the choice of $T$.
If $C_0$ is an odd cycle, then $w^+C_0w$ is a good even $(w,0)$-tree, and $w^{++}C_0w^+$ is a good even $(w,1)$-tree.  Therefore, there is a good even $(w,\lambda)$-tree, which contradicts the choice of $T$.
Thus, $C_0$ must be an even cycle.

If there is $u \in Y_0$ that has degree less than $\Delta(G)$ in $G$, then $T = u^+C_0u$ is a good weakly even $(w,\lambda)$-tree (since $u^+\in X_0$ is the only leaf of $T$ that possibly has degree $\Delta(G)$ in $G$), contradicting the choice of $T$.
Therefore, all vertices of $Y_0$ have degree $\Delta(G)$ in $G$.

Suppose that $G$ has an edge $uv$ joining $u, v \in Y_0$.
Consider $T_1 = u^+C_0 u^- \cup \{uv\}$ and $T_2 = v^+ C_0 v^- \cup \{uv\}$.
If $w \ne u$ then $T_1$ is a good even $(w,\lambda)$-tree, and if $w = u$ then $T_2$ is a good even $(w,\lambda)$-tree.
Either situation contradicts the choice of $T$.
Thus, $Y_0$ is an independent set in $G$.
\end{proof}

If $T$ is null, let $C_0$ be the  component (even cycle) of $F$ that contains $w$, and let $(X_0,Y_0)$ be the $(w,\lambda)$-bipartition of $C_0$.
If $T$ is nonnull, let $(X_0,Y_0)$ be the $(w,\lambda)$-bipartition of $T$.  

 \begin{CLA}\label{claim:Y_0-edges}
 	Suppose that $T$ is nonnull. 
 \begin{enumerate}[label=\rm(\alph*)]\setlength{\itemsep}{0pt}
       \item Then $E_G(X_0, V(G)\setminus V(T)) =\emptyset$ and $E_G(Y_0, V(G)\setminus V(T)) \ne \emptyset$.
      \item Suppose that $y_0z \in E(G)$ with $y_0 \in Y_0$ and $z \notin V(T)$.
Then the component $C$ of $F$ containing $z$ is an even cycle.
If $(X, Y)$ is the $(z,0)$-bipartition of $C$ (so that $z \in X$), then all vertices of $Y$ have maximum degree in $G$, and $Y$ is an independent set in $G$.
  \end{enumerate}
 \end{CLA}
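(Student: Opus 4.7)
The plan is to prove each assertion by contradiction, exploiting the maximality of $T$: whenever some assertion fails, I construct a strictly larger good weakly even $(w,\lambda)$-tree $T' \supsetneq T$. All constructions follow a uniform template. Given a hypothesized outgoing edge ($x_0 z$ in (a), $y_0 z$ in (b)) and letting $C$ be the component of $F$ containing $z$, attach $z$ to $T$ via that edge and attach $V(C) \setminus \{z\}$ by appending $C$ itself (when $C$ is a path) or $C - e$ for a carefully chosen edge $e \in E(C)$ (when $C$ is a cycle). Because $T$ is good and $z \notin V(T)$, we have $V(C) \cap V(T) = \emptyset$, so any such $T'$ is strictly larger than $T$. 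Path components are the easy case because their endpoints have degree $< \Delta(G)$ by the definition of a weak $2$-factor, so any new leaves they contribute are automatically fine for weakly even.

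For part (a), the attaching edge is $x_0 z$, so in $T'$ the vertex $z$ has type~$1$. If $C$ is an even cycle, deleting an edge of $C$ incident to $z$ leaves a single new leaf at distance $|V(C)|-1$ (odd) from $z$, and it is therefore of type~$0$. If $C$ is an odd cycle, deleting $z^+ z^{++}$ produces two new leaves $z^+$ and $z^{++}$ both at odd distance from $z$ (namely $1$ and $|V(C)|-2$ respectively), and they are likewise of type~$0$. In every subcase $T'$ is weakly even, contradicting maximality, so $E_G(X_0, V(G) \setminus V(T)) = \emptyset$. The second half of (a) is then immediate: since $V(T) \ne V(G)$ and $G$ is connected, some edge of $G$ leaves $V(T)$, and by the first half its endpoint in $V(T)$ must lie in $Y_0$.

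For part (b), the attaching edge is $y_0 z$, so in $T'$ the vertex $z$ has type~$0$ and the parities reverse. If $C$ were a path or an odd cycle, we could again extend $T$: in the odd-cycle case, deleting $z z^+$ puts $z^+$ at even distance from $z$, hence of type~$0$. So $C$ must be an even cycle, and we write $(X,Y)$ for its $(z,0)$-bipartition. If some $u \in Y$ had $d_G(u) < \Delta(G)$, deleting the edge $u u^+$ makes $u$ a type-$1$ leaf whose low degree keeps $T'$ weakly even, while $u^+ \in X$ appears as a type-$0$ leaf; another contradiction. Thus every $u \in Y$ has maximum degree. Finally, to prove $Y$ is independent, assume $u_1 u_2 \in E(G)$ with $u_1, u_2 \in Y$ and mimic the construction of Claim~\ref{claim:w-cycle-even}: set $T' = T \cup \{y_0 z\} \cup (u_1^+ C u_1^-) \cup \{u_1 u_2\}$, whose three new leaves $u_1^+, u_1^-, u_1$ are all of type~$0$ (the first two belong to $X$; the chord flips $u_1$ to type~$0$ since $u_2 \in Y$ has type~$1$). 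This final contradiction completes the proof.

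The main obstacle is the parity bookkeeping. The type of each new leaf in $T'$ is determined by the parity of its distance from $z$ in the spanning path of $C$ used, and this parity depends on four combined choices: attaching at a type-$0$ or type-$1$ vertex of $T$, the parity of $|V(C)|$, and which edge of $C$ we delete. In each subcase one has to match the right edge to the right parity; the delicate subcases are the odd-cycle case in (a), which requires deleting an edge not incident to $z$, and the final step of (b), which requires routing through a chord to flip the parity at $u_1$.
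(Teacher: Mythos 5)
Your proposal is correct and follows essentially the same route as the paper: in each case you build a strictly larger good weakly even $(w,\lambda)$-tree by attaching the component $C$ of $F$ through the given edge and deleting a suitably chosen edge of $C$ (your edge-deletion choices correspond exactly to the paper's subpaths $z^+Cz$, $z^{++}Cz^+$, $u^+Cu$, and $u^+Cu^-$ with the chord $uv$), and you dispose of the second half of (a) by the same connectivity argument. The parity bookkeeping you describe matches the paper's verification that every new leaf is either type-$0$ or of degree less than $\Delta(G)$.
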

 
\begin{proof} 
(a)
If $E_G(X_0,V(G)\setminus V(T)) \ne \emptyset$, then there exists $x_0 z \in E(G)$ such that $x_0 \in X_0$ and $z \notin V(T)$. 
Let $C$ be the component of $F$ containing $z$.
If $C$ is a path, then $T' = T \cup \{x_0z\} \cup C$ is a good weakly even $(w,\lambda)$-tree.
Indeed, each leaf of $T'$ that is not a leaf of $T$  has degree less than $\Delta(G)$ in $G$ (since $F$ is a weak $2$-factor).
If $C$ is an odd cycle, then $T \cup \{x_0z\} \cup z^{++}Cz^+$ is a good weakly even $(w,\lambda)$-tree.
If $C$ is an even cycle, then $T \cup \{x_0z\} \cup z^+Cz$ is a good weakly even $(w,\lambda)$-tree.  All three situations contradict  the maximality of $T$, and therefore $E_G(X_0,V(G)\setminus V(T)) = \emptyset$.

If $E_G(Y_0,V(G)\setminus V(T)) = \emptyset$  then $E_G(V(T),V(G)\setminus V(T)) = \emptyset$, which (since $V(T) \ne \emptyset$ and $V(T) \ne V(G)$) contradicts the fact that $G$ is connected.
Hence, $E_G(Y_0,V(G)\setminus V(T)) \ne \emptyset$.

(b)
If $C$ is a path, then each leaf of $T'= T \cup \{y_0z\} \cup C$ that is not a leaf of $T$ is an endvertex of $C$ and so has degree less than $\Delta(G)$ in $G$.  Thus, $T'$ is a good weakly even $(w,\lambda)$-tree.
If $C$ is an odd cycle, then $T \cup \{y_0z\} \cup z^+Cz$ is a good weakly even $(w,\lambda)$-tree. 
In both cases we contradict the maximality of $T$.
Hence, $C$ is an even cycle.

If there is $u \in Y$ that has degree less than $\Delta(G)$ in $G$, then $T' = T \cup \{y_0 z\} \cup u^+Cu$ is a good weakly even $(w,\lambda)$-tree (since $u$ is the only vertex of $Y$ that is a leaf of $T'$ but not of $T$), contradicting the maximality of $T$.
Therefore, all vertices of $Y$ have degree $\Delta(G)$ in $G$.

Suppose that $G$ has an edge $uv$ joining $u, v \in Y$.  Let $T' = T \cup \{y_0z, uv\} \cup u^+Cu^-$.
Then $T'$ is a good weakly even $(w,\lambda)$-tree because every vertex that is not a leaf of $T$ but possibly a leaf of $T'$ (namely $u^-$, $u$, $u^+$) is type-$0$.  This contradicts the maximality of $T$.
Thus, $Y$ is an independent set in $G$.
\end{proof}

Since $T$ is a good tree, $F' = F-V(T)$ is a union of components of $F$.
Let $\Ce$ be the set of all even cycles of $F'$, and $\Fe = \bigcup_{C \in \Ce} C$.
If $T$ is null then $C_0 \subseteq \Fe$, and we say a bipartition $(X,Y)$ of $\Fe$ is \emph{consistent} if $X_0 \subseteq X$ and $Y_0 \subseteq Y$.  If $T$ is nonnull, every bipartition of $\Fe$ is considered to be consistent.
Let $C_1, C_2, \dots, C_\ell$ be a sequence of $\ell\geq 1$ distinct cycles in $\Ce$.  Given such a sequence, we define $X_i = X \cap V(C_i)$ and $Y_i = Y \cap V(C_i)$ for $i \in [\ell]$.  The sequence is \emph{admissible} with respect to a consistent bipartition $(X,Y)$ of $\Fe$ if (1) either $T$ is null and $C_1 = C_0$, or $T$ is nonnull and $E(Y_0, X_1) \ne \emptyset$, and (2) $E(Y_i, X_{i+1}) \ne \emptyset$ for $i \in [\ell-1]$.  A cycle $C \in \Ce$ is \emph{admissible} if it is the final (equivalently, any) cycle of an admissible sequence.

Choose a consistent bipartition $(X,Y)$ of $\Fe$ such that the number of admissible cycles with respect to $(X,Y)$
is as large as possible, and let $\Ca$ be the set of admissible cycles with respect to $(X,Y)$.
If $T$ is null then $C_0\in \Ca$, and if $T$ is nonnull, then $\Ca$ is nonempty by Claim \ref{claim:Y_0-edges}.
Let $\Fa = \bigcup_{C \in \Ca} C$, $\Xa = X \cap V(\Fa)$, and $\Ya = Y \cap V(\Fa)$.
By the maximality of $\Ca$, if  $E(V(\Fa) \cap Y, V(C)) \ne \emptyset$ for a component $C$ of $F'$ that is vertex-disjoint from $\Fa$, then $C$ must be an odd cycle or a path.

\begin{CLA}\label{claim:cycles-maximum-degree-Y-vertices}
All vertices in $\Ya$ have maximum degree in $G$.
\end{CLA}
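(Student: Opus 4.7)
The plan is to argue by contradiction: suppose some $y \in \Ya$ has $d_G(y) < \Delta(G)$, and construct a good weakly even $(w,\lambda)$-tree strictly larger than $T$ (or any such tree at all, when $T$ is null), contradicting the choice of $T$. When $T$ is null, Claim~\ref{claim:w-cycle-even} already forces $d_G(u) = \Delta(G)$ for every $u \in Y_0$, so we may always fix an admissible sequence $C_1,\dots,C_k$ with $y \in Y_k$ together with witnessing edges $y_{i-1}x_i \in E(G)$ along it (where $y_0 \in Y_0$ when $T$ is nonnull, and $C_1 = C_0$ when $T$ is null, in which case the $y_0x_1$ bridge is absent). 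Setting $y_k := y$, on each $C_i$ pick a neighbor $y_i^+$ of $y_i$ and consider
\[
T' \;=\; T \;\cup\; \{y_0x_1\} \;\cup\; \bigcup_{i=1}^{k-1}\bigl((C_i - y_iy_i^+) \cup \{y_ix_{i+1}\}\bigr) \;\cup\; (C_k - yy^+),
\]
modifying the first piece appropriately when $T$ is null. An edge count combined with connectivity via the bridges confirms that $T'$ is a tree; its vertex set is $V(T) \cup V(C_1) \cup \cdots \cup V(C_k)$, so $T'$ is good. Because each bridge $y_{i-1}x_i$ joins $Y_{i-1}$ to $X_i$, the bipartition of $T'$ extends that of $T$ and agrees with $(X_i, Y_i)$ on each $C_i$, making $T'$ a $(w,\lambda)$-tree.

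The heart of the verification is that every type-$1$ leaf of $T'$ has degree less than $\Delta(G)$ in $G$. Each deleted edge $y_iy_i^+$ is chosen so that the type-$1$ endpoint of the Hamiltonian path of $C_i$ is $y_i$ itself; for $i < k$ that endpoint is spared from leafhood by the outgoing bridge $y_ix_{i+1}$, and the opposite endpoint $y_i^+ \in X_i$ is a legitimate type-$0$ leaf. The sole genuinely new type-$1$ leaf is $y$ itself, legal by our standing assumption that $d_G(y) < \Delta(G)$. Each $x_i$ is either interior to $C_i - y_iy_i^+$ or coincides with the endpoint $y_i^+$, and in either case carries the incoming bridge $y_{i-1}x_i$, so no $x_i$ is a leaf. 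Likewise, $y_0 \in V(T)$ gains the bridge $y_0x_1$ and so is not a leaf of $T'$; every other leaf of $T'$ lying in $V(T)$ was already a leaf of $T$ and inherits the weakly even property.

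The main obstacle, and only subtle case, is $T$ null, where no bridge enters $C_0$: here $C_0 - y_1y_1^+$ has endpoints $y_1$ (non-leaf by $y_1x_2$) and $y_1^+ \in X_0$ (legal type-$0$ leaf), while the root $w$ is either an endpoint or an interior vertex of this path and ends up non-leaf or a legal type-$0$ leaf in every sub-case. Thus $T'$ is a good weakly even $(w,\lambda)$-tree of order strictly greater than $T$, yielding the promised contradiction.
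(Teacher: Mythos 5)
Your proof is correct and follows essentially the same route as the paper's: delete one edge at the type-$1$ attachment vertex of each cycle in an admissible sequence ending at the cycle containing the low-degree vertex $y$, chain the resulting Hamiltonian paths together with the witnessing edges $y_{i-1}x_i$, and observe that $y$ is the only new type-$1$ leaf. The only cosmetic difference is that the paper takes a shortest offending admissible sequence (and notes $\ell\ge 2$ via Claims~\ref{claim:w-cycle-even} and~\ref{claim:Y_0-edges}), which your argument shows is not actually needed.
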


\begin{proof}
Suppose that the claim is false and let $C_1, C_2, \ldots, C_\ell$ be a shortest admissible sequence with respect to $(X,Y)$ for which some vertex $z$ in $Y \cap C_\ell$ does not have maximum degree in $G$.
Note that, by Claims \ref{claim:w-cycle-even} and \ref{claim:Y_0-edges}, all vertices of $Y_1$ have maximum degree in $G$, so $\ell\ge 2$.
There is $y_ix_{i+1} \in E(G)$ with $y_i \in Y_i$ and $x_{i+1} \in X_{i+1}$ for $i\in [\ell-1]$, and for $i=0$ if $T$ is nonnull.
Let $\Tp$ be null if $T$ is null, and $T \cup \{y_0 x_1\}$ if $T$ is nonnull.
Define
$$\textstyle T' =  \Tp \cup (\bigcup_{i=1}^{\ell-1} y_i^+ C_{i} y_i x_{i+1} ) \cup z^+ C_{\ell} z .$$
Then $T'$ is a good weakly even $(w,\lambda)$-tree since $z$, which has degree less than $\Delta(G)$, is the only vertex in $Y$ that is a leaf of $T'$ and not a leaf  of $T$.  This contradicts the maximality of $T$.
\end{proof}

We now consider two cases.

\begin{CASE} Suppose $\Ya$ is an independent set in $G$.

In this case, Lemma~\ref{lem:Y-out-edge} implies that there is an edge $uz$ of $G$ joining $u \in \Ya$ and $z \in V(G)\setminus V(\Fa)$.
Then there is an admissible sequence $C_1, C_2, \dots, C_\ell$ with $u \in V(C_\ell)$.
There is $y_ix_{i+1} \in E(G)$ with $y_i \in Y_i$ and $x_{i+1} \in X_{i+1}$ for $i\in [\ell-1]$, and for $i=0$ if $T$ is nonnull.
Let $\Tp$ be null if $T$ is null, and $T \cup \{y_0 x_1\}$ if $T$ is nonnull.

If $z \notin V(T)$, let $C$ be the component of $F'$ that contains $z$.  Then $C$ is an odd cycle or a path, by the maximality of $\Ca$.  Let $Q=z^+ C z$ if $C$ is an odd cycle and $Q=C$ if $C$ is a path.
Define
$$\textstyle T' =  \Tp \cup (\bigcup_{i=1}^{\ell-1} y_i^+ C_{i} y_i x_{i+1}) \cup u^+C_\ell uz \cup Q .$$
Then $T'$ is a good weakly even $(w,\lambda)$-tree.  This contradicts the maximality of $T$.

If $z\in V(T)$, this implies that $T$ is nonnull. Since $u$ is a neighbor of $z$ not in $V(T)$, by Claim~\ref{claim:Y_0-edges}, $z \in Y_0$.  Define
$$\textstyle T' =  T \cup \{y_0x_1, uz\} \cup (\bigcup_{i=1}^{\ell-1} y_i^+ C_{i} y_i x_{i+1}) \cup u^+C_\ell u^-.$$
Then $T'$ is a good weakly even $(w,\lambda)$-tree because every vertex that is not a leaf of $T$ but possibly a leaf of $T'$ (including $u$) is type-$0$.  This contradicts the maximality of $T$.
\end{CASE}

\begin{CASE} Suppose $\Ya$ is not an independent set in $G$.

Then there is an edge $yz$ where $y, z \in \Ya$.  Suppose that $y$ is a vertex of $C \in \Ca$ and $z$ is a vertex of $\Ct \in \Ca$, where possibly $C = \Ct$.
Let $C_1, C_2, \dots, C_\ell$ be an admissible sequence with $C = C_\ell$.
There is $y_ix_{i+1} \in E(G)$ with $y_i \in Y_i$ and $x_{i+1} \in X_{i+1}$ for $i\in [\ell-1]$, and for $i=0$ if $T$ is nonnull.
Let $\Ct_1, \Ct_2, \dots, \Ct_k$ be an admissible sequence with $\Ct = \Ct_k$, and let $\Xt_j = X \cap V(\Ct_j)$, $\Yt_j = Y \cap V(\Ct_j)$ for $j \in [k]$.
There is $\yt_j\xt_{j+1} \in E(G)$ with $\yt_j \in \Yt_j$ and $\xt_{j+1} \in \Xt_{j+1}$ for $j\in [k-1]$, and for $j=0$ if $T$ is nonnull.
Let $\Tp$ be null if $T$ is null, and $T \cup \{y_0 x_1\}$ if $T$ is nonnull.

Suppose first that $C$ and $\Ct$ belong to a common admissible sequence.  Without loss of generality we may suppose that $\Ct \in \{C_1, C_2, \dots, C_\ell\}$.  Define
$$\textstyle T' =  \Tp \cup \{yz\} \cup (\bigcup_{i=1}^{\ell-1} y_i^+ C_{i} y_i x_{i+1}) \cup y^+C_\ell y^-.$$
Then $T'$ is a good weakly even $(w,\lambda)$-tree because every vertex that is not a leaf of $T$ but possibly a leaf of $T'$ (including $y$) is type-$0$.  This contradicts the maximality of $T$.

Next, suppose that $\{C_1, C_2, \dots, C_\ell\} \cap \{\Ct_1, \Ct_2, \dots, \Ct_k\} = \emptyset$.  Then $T$ is nonnull. Let
$$\textstyle T' = 
    T \cup \{y_0x_1, \yt_0\xt_1, yz\}
    \cup (\bigcup_{i =1}^{\ell-1} y_i^+ C_{i} y_i x_{i+1})
    \cup (\bigcup_{j=1}^{k-1} \yt_{j}^+ \Ct_j \yt_j \xt_{j+1})
    \cup y^+ C_\ell y^- \cup z^+ \Ct_k z.
$$
Again $T'$ is a good weakly even $(w,\lambda)$-tree because every vertex that is not a leaf of $T$ but possibly a leaf of $T'$ (including $y$) is type-$0$.  This contradicts the maximality of $T$.

Finally, suppose that $\{C_1, C_2, \dots, C_\ell\} \cap \{\Ct_1, \Ct_2, \dots, \Ct_k\} \ne \emptyset$, but $C = C_\ell \notin \{\Ct_1, \Ct_2, \dots, \Ct_k\}$ and $\Ct = \Ct_k \notin \{C_1, C_2, \dots, C_\ell\}$.
Let $b \in [k-1]$ be the largest integer such that $\Ct_b \in \{C_1, C_2, \dots, C_\ell\}$.  Then $\Ct_b = C_a$ for some $a \in [\ell-1]$ and $\{C_1, C_2, \dots, C_\ell\} \cap \{\Ct_{b+1}, \Ct_{b+2}, \dots, \Ct_k \} = \emptyset$.
Let
$$\textstyle T' =
    \Tp \cup\{\yt_b \xt_{b+1}, yz\}
    \cup (\bigcup_{i =1}^{\ell-1} y_i^+ C_{i} y_i x_{i+1})
    \cup (\bigcup_{j=b+1}^{k-1} \yt_{j}^+ \Ct_j \yt_j \xt_{j+1})
    \cup y^+C_\ell y^- \cup z^+\Ct_k z.$$
Once more $T'$ is a good weakly even $(w,\lambda)$-tree because every vertex that is not a leaf of $T$ but possibly a leaf of $T'$ (including $y$) is type-$0$.  This contradicts the maximality of $T$.
\end{CASE}

In all situations we reach a contradiction, so we conclude that $V(T)=V(G)$, as required.
\end{proof}

\bibliographystyle{plain}
\bibliography{References}

\end{document}